\documentclass{amsart}
\usepackage{hyperref}
\usepackage{mathtools}
\usepackage{color}
\usepackage[margin=1in]{geometry}
\usepackage{amsmath,amsfonts,amsthm,amssymb}
\usepackage{setspace}
\usepackage{Tabbing}
\usepackage{fancyhdr}
\usepackage{lastpage}
\usepackage{extramarks}
\usepackage{chngpage}
\usepackage{mathrsfs}
\usepackage{esint}
\usepackage{xcolor}
\usepackage{enumitem}
\usepackage{comment}

\newtheorem{theorem}{Theorem}[section]
\newtheorem{proposition}[theorem]{Proposition}
\newtheorem{lemma}[theorem]{Lemma}

\newtheorem{conjecture}[theorem]{Conjecture}


\DeclareMathOperator{\essinf}{ess\, inf}

\NewDocumentCommand{\ind}{}{\chi}


\newcommand{\Z}{\mathbb{Z}}

\newcommand{\R}{\mathbb{R}}


\DeclareMathOperator{\esssup}{ess\,sup}

\NewDocumentCommand{\dmu}{}{\ \mathrm{d}\mu}

\NewDocumentCommand{\dy}{}{\ \mathrm{d}y}

\parindent = 10pt     
\parskip = 8pt

\begin{document}
\title[Spectrality of Product Sets with a Perturbed Interval Factor]{Spectrality of Product Sets with a Perturbed Interval Factor}

\author[Ramabadran and van Vliet]{Aditya Ramabadran and Johannes van Vliet}

\address{Aditya Ramabadran
\newline \indent Department of Mathematics, University of California, Berkeley\indent 
\newline \indent  Evans Hall, Berkeley, CA 94720 \indent }
\email{aditya.ramabadran@berkeley.edu}

\address{Johannes van Vliet
\newline \indent Department of Mathematics, University of Washington\indent 
\newline \indent  C138 Padelford Hall Box 354350, Seattle, WA 98195,\indent }
\email{johov@uw.edu}

\date{\today}
\subjclass[2020]{42C99}

\begin{abstract}
A set $\Omega \subset \mathbb{R}^d$ is said to be spectral if $L^2(\Omega)$ admits an orthogonal basis of exponentials. While the product of spectral sets is known to be spectral, the converse fails in general. In this paper, we prove that the converse holds when one factor is a perturbation of an interval: if $E \subset [0,3/2 - \epsilon]$ and $F$ are bounded sets of measure $1$, then $E \times F$ is spectral if and only if both $E$ and $F$ are spectral.
\end{abstract}
\maketitle

\section{Introduction}

Suppose $\Omega \subset \R^d$ is a bounded set of positive measure. If for some countable set $\Lambda \subset \R^d$, $\{ \exp(2\pi i \langle\lambda,x\rangle ) \}_{\lambda \in \Lambda}$ is an orthonormal basis of $L^2(\Omega)$, we say that $\Omega$ is \textit{spectral} and $\Lambda$ is a \textit{spectrum} for $\Omega$. Most notably, Fuglede conjectured that a set $\Omega$ is spectral if and only if it can tile the space $\R^d$ by translations \cite{Fug74}. Although counterexamples were found in dimensions $d \ge 3$ \cite{T05, Mat05, KM06}, Fuglede's conjecture sparked many interesting results, and has been proven in many special cases, such as when $\Omega$ is a convex body \cite{LM22}.

One interesting area of research in spectrality is in product domains. If $E \subset \R^n, F \subset \R^m$ are bounded sets of positive measure, then $E, F$ being spectral in $\R^n$ and $\R^m$ implies that $\Omega = E \times F$ is spectral in $\R^n \times \R^m$, since the product of spectra for $E$ and $F$ forms a spectrum for $\Omega$ \cite{Kol24}. In \cite{K16}, Kolountzakis conjectured that the converse is true as well: 

\begin{conjecture}
    Let $E \subset \R^n, F \subset \R^m$ be bounded and measurable sets. Then $\Omega = E \times F$ is spectral if and only if $E$ is spectral in $\R^n$ and $F$ is spectral in $\R^m$. 
\end{conjecture}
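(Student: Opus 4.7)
The forward direction is standard: given spectra $\Lambda_E, \Lambda_F$ for $E$ and $F$, the product $\Lambda_E \times \Lambda_F$ is a spectrum for $E \times F$, because $\widehat{\chi_{E\times F}}(\xi,\eta) = \widehat{\chi_E}(\xi)\,\widehat{\chi_F}(\eta)$ factorizes and the two factors control orthogonality separately. The content of the conjecture is the converse, and I organize my plan around that direction, with no a priori restriction on the dimensions $n,m$ or on the geometry of $E$ and $F$.

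Suppose $\Lambda \subset \R^{n+m}$ is a spectrum for $\Omega = E \times F$. Orthogonality of the exponentials is equivalent to
$$\Lambda - \Lambda \;\subset\; \{0\} \cup (Z_E \times \R^m) \cup (\R^n \times Z_F),$$
where $Z_E = \{\xi \in \R^n : \widehat{\chi_E}(\xi) = 0\}$ and $Z_F = \{\eta \in \R^m : \widehat{\chi_F}(\eta) = 0\}$. Combined with the density identity $\dens(\Lambda) = |E|\,|F|$, the task reduces to extracting, from $\Lambda$ alone, a set $\Lambda_E \subset \R^n$ with $\Lambda_E - \Lambda_E \subset \{0\} \cup Z_E$ and density $|E|$, and an analogous $\Lambda_F$.

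My main plan is to exploit the fiber and slice structure of $\Lambda$. For each $\xi \in \pi_1(\Lambda)$ let $\Lambda_\xi = \{\eta : (\xi, \eta) \in \Lambda\}$ be the vertical fiber; differences within a fiber have first coordinate $0$ and so must lie in $\R^n \times Z_F$. The first subgoal is to identify a single fiber $\Lambda_{\xi_0}$ of density $|F|$; such a fiber is automatically a spectrum for $F$. Dually, any horizontal slice $\{\xi : (\xi, \eta_0) \in \Lambda\}$ of density $|E|$ is a spectrum for $E$. To produce such slices I would average sums of the form $\sum_{\lambda \in \Lambda} |\widehat{\chi_\Omega}(\lambda - \mu)|^2$ against probe points $\mu \in \R^{n+m}$ and exploit the factorization of $\widehat{\chi_\Omega}$ to derive integral identities that constrain how the mass of $\Lambda$ distributes across slices, then try to pick out slices realizing the correct density.

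The main obstacle, and the reason the conjecture remains open, is that the containment above does not by itself force $\Lambda$ to be a product, nor even to contain a single slice of the correct density. A priori $\Lambda$ may be a complicated interleaving in which every individual fiber has density strictly less than $|F|$ and every slice strictly less than $|E|$, while the aggregate density is $|E|\,|F|$. A successful proof must therefore perform a structural reconstruction: from the Fourier vanishing on $\Lambda - \Lambda$ alone, produce or approximate a product structure inside $\Lambda$. I expect this reconstruction to be the decisive difficulty; any realistic attack likely proceeds by first establishing the conjecture under structural hypotheses on one of the factors (convexity, near-tiling, symmetry, and so on) and then trying to remove them, rather than by a direct general argument.
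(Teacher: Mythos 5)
Your forward direction is fine and matches the standard argument (product of spectra is a spectrum for the product), but the converse --- which is the entire content of the statement --- is not something you prove; you only sketch a strategy and then correctly observe that the ``structural reconstruction'' of a product-like set inside $\Lambda$ is the decisive difficulty. That is not a gap you could have closed with more effort: the statement is a \emph{conjecture} in this paper, not a theorem, and the paper does not prove it either. Worse for your plan, the introduction explicitly cites a recent counterexample (a set $\Omega = E \times F \subset \R^6$ with $E \subset \R^3$ not spectral) showing the converse is \emph{false} in general. So the obstacle you flag --- that $\Lambda - \Lambda \subset \{0\} \cup (Z_E \times \R^m) \cup (\R^n \times Z_F)$ together with the density condition does not force a fiber or slice of the correct density --- is not merely a technical hurdle; it is genuinely insurmountable without extra hypotheses, and any fiber/slice extraction scheme of the kind you describe must fail on the counterexample.

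Your closing remark, that a realistic attack should impose structural hypotheses on one factor and work from there, is in fact exactly what the paper does: it proves the converse only when $E \subset [0, 3/2 - \epsilon]$ has measure $1$ (Theorem 1.3), using the weak tiling consequence of spectrality, a rigidity result forcing $E + \Z$ to be a tiling, and then a Jensen's formula argument to handle the second factor. If you want to contribute a proof here, the correct target is that restricted theorem, not the conjecture itself.
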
 

If some spectrum $\Lambda$ for $\Omega$ has a product structure, i.e. $\Lambda = U \times V$, then $U$ and $V$ can serve as spectra for the factors $A$ and $B$, but in general we cannot assume that the spectrum $\Lambda$ has such a product structure. Nevertheless, there have been results in certain special cases. 

In \cite{GL16}, Greenfeld and Lev proved that this conjecture holds when one of the factors, i.e. $E$, is an interval in $\R$. \cite{K16} proved that it holds when $E$ is the union of two intervals in $\R$. \cite{GL20} then proved the conjecture when $E$ is a convex polygon in $\R^2$. \cite{KLM} proved that if $\Omega = E \times F$ where $E$ is a convex body in $\R^n$ and $F$ is any bounded, measurable set in $\R^m$, then $\Omega = E \times F$ being spectral implies that $E$ must be spectral (but there is no claim on the spectrality of $F$). Recently, \cite{Som24} proved that the conjecture is false in general by constructing a counterexample, a set $\Omega = E \times F \subset \R^6$ such that $E \subset \R^3$ is not spectral. 

In this paper, we focus on spectral product sets where one of the factors is a perturbation of an interval. This is inspired by \cite{KL02} which focuses on sets $E \subset \R$ contained in intervals of length strictly less than $3|E|/2$. Specifically, they prove the following: 

\begin{theorem}
Suppose $E \subseteq [0,L]$ is measurable with measure 1 and $L = 3/2 - \epsilon$ for some $\epsilon > 0$. Let $\Lambda \subset \mathbb{R}$ be a discrete set containing 0. Then
\begin{enumerate}
    \item[(a)] if $E+\Lambda = \mathbb{R}$ is a tiling, it follows that $\Lambda = \mathbb{Z}$.
    \item[(b)] if $\Lambda$ is a spectrum of $E$, it follows that $\Lambda = \mathbb{Z}$.
\end{enumerate}
\label{thm_KL}
\end{theorem}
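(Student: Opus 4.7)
Proof plan. The idea is to use the tight support condition $L<3/2$ together with $|E|=1$ to force $\Lambda$ into a rigid arithmetic structure, in both the tiling and spectral settings.

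Part (a). I would begin by deriving sharp gap bounds. Order $\Lambda=\{\lambda_n\}_{n\in\Z}$ increasingly with $\lambda_0=0$ and write $s_n=\lambda_{n+1}-\lambda_n$. The upper bound $s_n\le L$ holds since each translate $E+\lambda_m$ lies in $[\lambda_m,\lambda_m+L]$, so if $s_n>L$ the strip $(\lambda_n+L,\lambda_{n+1})$ lies in no translate. The lower bound $s_n\ge 2-L$ comes from essential disjointness: when $s_n<1$, both $E+\lambda_n$ and $E+\lambda_{n+1}$ meet the overlap box $[\lambda_{n+1},\lambda_n+L]$ of length $L-s_n$, each contributing measure at least $1-s_n$ (the complement of $E$ in its host interval has measure only $L-1$), so $2(1-s_n)\le L-s_n$. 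Next I invoke the Lagarias--Wang periodicity theorem for 1-dimensional tilings by bounded sets: $\Lambda=T\Z+A$ for some integer $T\ge 1$ and $A=\{0=a_0<a_1<\cdots<a_{T-1}\}\subset[0,T)$, and the tiling becomes the partition $E\oplus A=\R/T\Z$.

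The goal is $T=1$. To exclude $T=2$: from $E\sqcup(E+a_1)=\R/2\Z$, applying the translation $+a_1$ once more yields $E+2a_1\equiv E$ on $\R/2\Z$. For $|E|=1$, the stabilizer $\{h:E+h\equiv E\bmod 2\}$ is a finite cyclic subgroup, forcing $2a_1=2k/n$ rational, and $E$ is invariant under $+2/n$ on $\R/2\Z$. The key point is that $L<3/2$ gives $L+2/n<2$ whenever $n\ge 4$, so the torus-invariance lifts to $\R$-invariance of $E$ under $+2/n$, incompatible with boundedness. The residual cases $n\le 3$ are eliminated using the gap constraint $a_1\in[1/2+\epsilon,3/2-\epsilon]$ and, for $n=3$, a direct computation showing $E$ must decompose into three translated copies of a set of length at most $1/6-\epsilon$, violating $|E|=1$. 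The same ``lift torus-periodicity of $E$ to $\R$-periodicity'' scheme extends to $T\ge 3$, using the fact that the stabilizer of $E$ in $\R/T\Z$ is again cyclic and its generator has denominator bounded from below by $L<3/2$.

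For part (b), orthogonality gives $\hat\chi_E(\lambda-\mu)=0$ for distinct $\lambda,\mu\in\Lambda$, completeness gives $\sum_{\lambda\in\Lambda}|\hat\chi_E(\xi-\lambda)|^2\equiv 1$, and Landau's density theorem gives $\mathrm{dens}(\Lambda)=|E|=1$. I would attempt to reduce (b) to (a) by establishing the Fuglede ``spectrum implies tiling'' direction in this narrow-support regime. The main input is Paley--Wiener: $\hat\chi_E$ is entire of exponential type at most $2\pi L$, so its real zero set has density at most $L<3/2$. Combined with uniform discreteness of $\Lambda$ and the completeness identity, this should recover the analogous gap bounds $s_n\in[2-L,L]$ in the spectral setting and then promote them to the pointwise tiling equation $\sum_\lambda\chi_E(x-\lambda)\equiv 1$ almost everywhere, at which point part (a) applies.

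The main obstacle is handling $T\ge 3$ cleanly in part (a): for $T=2$ the torus $\R/2\Z$ admits only one-parameter translation invariance and the lifting argument is direct, but for larger $T$ the set $A$ has more degrees of freedom and the stabilizer of $E$ in $\R/T\Z$ must be analyzed against all pairwise differences $a_i-a_j$. I expect $L<3/2$ to enter through the constraint that any length-$L$ window contains boundedly many elements of $\Lambda$ (since $s_n\ge 2-L>1/2$), providing the needed combinatorial rigidity---possibly via Fourier analysis on $\R/T\Z$ using the factorization $\hat\chi_E(k/T)\cdot S_A(k)=0$ for $k\not\equiv 0\pmod T$, where $S_A(k)=\sum_j e^{-2\pi i k a_j/T}$.
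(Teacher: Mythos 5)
This theorem is quoted from the literature rather than proved in the paper, but the intended mechanism is laid out explicitly: Lemma 2.1 gives $m(E\cap(E+t))>0$ for \emph{all} $0\le t<1$, hence any tiling set (or spectrum) is $1$-separated, so $[0,1]+\Lambda$ is a packing; Theorem 2.2 then upgrades this packing to a tiling because $E+\Lambda$ tiles, and $0\in\Lambda$ forces $\Lambda=\Z$. Measured against that route, your proposal has a genuine gap in both parts, and the common missing ingredient is the full strength of the overlap lemma. Your two-translate measure count in the ``overlap box'' only shows $m(E\cap(E+t))>0$ for $0\le t<2-L=1/2+\epsilon$, hence only $s_n\ge 1/2+\epsilon$; the content of Lemma 2.1 is that the overlap persists all the way up to $t=1$ (this needs a finer case analysis, e.g.\ on $t\in[0,\tfrac12]$, $(\tfrac12,\tfrac34]$, $(\tfrac34,1)$, since for $t=3/4$ the two host intervals leave room $3/4-\epsilon$ for total mass $1/2$ and no contradiction arises from counting alone). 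With separation $\ge 1$ and density $1$ the conclusion of (a) is immediate and no periodization is needed. With only $s_n\ge 1/2+\epsilon$ you are driven to Lagarias--Wang plus a stabilizer analysis on $\R/T\Z$, and that route is incomplete where it matters: for $T\ge 3$ the identity $E+2a_1\equiv E$ has no analogue --- there is no reason the translation stabilizer of $E$ in $\R/T\Z$ is nontrivial at all --- and you acknowledge this case is unresolved.

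Part (b) is a plan rather than a proof. ``Establishing the spectrum-implies-tiling direction in this narrow-support regime'' is the entire difficulty, and the proposed inputs do not deliver it: orthogonality only gives $\hat{\chi_E}(\lambda-\mu)=0$, and the paper's own footnote exhibits $E=[0,\tfrac12]\cup[\tfrac34,\tfrac54]$ in this class with $\hat{\chi_E}(2/3)=0$, so zeros of $\hat{\chi_E}$ in $(0,1)$ cannot be excluded and the claimed spectral gap bound $s_n\ge 2-L$ does not follow from exponential-type zero counting. The argument that actually works again runs through Lemma 2.1: a spectrum makes $|\hat{\chi_E}|^2+\Lambda$ a tiling at level $1$ with $\int|\hat{\chi_E}|^2=1$, so $\supp\widehat{\delta_\Lambda}\subseteq\{K=0\}\cup\{0\}$ where $K=\chi_E*\chi_{-E}$; since $K>0$ on $(-1,1)$, this support condition is also satisfied by the tile $|\hat{\chi_{[0,1]}}|^2$ (whose inverse transform $(1-|t|)_+$ vanishes exactly for $|t|\ge 1$), whence $\Lambda$ is a spectrum of $[0,1]$ and $\Lambda=\Z$ --- the same packing-versus-tiling comparison of Theorem 2.2, transplanted to the Fourier side. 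Without the $t<1$ overlap statement, neither your part (a) nor your part (b) closes.
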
 

They note that this upper bound $L < 3/2$ is optimal since the set $[0,1/2) \cup [1,3/2]$ is contained in $[0,3/2]$, tiles $\mathbb{R}$ with translation set $\{0,1/2\} + 2\mathbb{Z}$, and has the spectrum $\{0,1/2\} + 2\mathbb{Z}$, so it has neither a lattice translation set nor a lattice spectrum. 

Our main theorem is the following:

\begin{theorem}
Let $E$ be measurable with measure $1$ and $E\subset[0,3/2-\epsilon]$ for some $\epsilon > 0$, and let $F$ be bounded and measurable with measure $1$. If $E \times F$ is spectral, then both $E$ and $F$ are spectral.
\label{thm_main}
\end{theorem}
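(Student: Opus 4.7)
The plan is to show that any spectrum $\Lambda\subset\R^2$ of $E\times F$ must have the product form $\Z\times Y$ (after a translation), whence $\Z$ is a spectrum of $E$ and $Y$ is a spectrum of $F$. Translating $\Lambda$ so that $(0,0)\in\Lambda$, decompose it into fibers $\Lambda_y:=\{x\in\R:(x,y)\in\Lambda\}$ over $y\in Y:=\pi_2(\Lambda)$. Each fiber is $E$-orthogonal in $L^2(E)$: for distinct $x,x'\in\Lambda_y$, orthogonality of $\Lambda$ gives $\hat\chi_E(x-x')\hat\chi_F(0)=0$ with $\hat\chi_F(0)=1$. Bessel's inequality then yields
\[
g_y(\xi):=\sum_{x\in\Lambda_y}|\hat\chi_E(\xi-x)|^2\le 1,
\]
with equality a.e.\ if and only if $\Lambda_y$ is a spectrum of $E$. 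Completeness of $\Lambda$ as a spectrum for $E\times F$ gives
\[
\sum_{y\in Y}g_y(\xi_1)\,|\hat\chi_F(\xi_2-y)|^2=1\quad\text{for a.e.\ }(\xi_1,\xi_2)\in\R^2.
\]

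The central step is to show $g_y\equiv 1$ a.e.\ for every $y\in Y$, so that each fiber is itself a spectrum of $E$. I would attempt this via a density/averaging argument combining the bound $g_y\le 1$ with the symmetric bound on the dual fibers $\Lambda^x:=\{y:(x,y)\in\Lambda\}$ (which are $F$-orthogonal, yielding $h_x(\eta):=\sum_{y\in\Lambda^x}|\hat\chi_F(\eta-y)|^2\le 1$); the completeness relation then forces equality, leveraging the hypothesis $E\subset[0,3/2-\epsilon]$ via the rigidity in Theorem \ref{thm_KL}. This is where I expect the main technical difficulty to lie. Once the claim is established, Theorem \ref{thm_KL}(b) applied fiberwise gives $\Lambda_y=t_y+\Z$ for some $t_y\in[0,1)$, with $t_0=0$ since $0\in\Lambda_0$.

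To align the translates, take $y\ne y'\in Y$ with $\hat\chi_F(y-y')\ne 0$; orthogonality of $\Lambda$ then forces $\hat\chi_E$ to vanish on the entire coset $(t_y-t_{y'})+\Z$. A short computation using $E\subset[0,3/2-\epsilon]$ rules this out unless $t_y-t_{y'}\in\Z$: vanishing of $\hat\chi_E$ on $c+\Z$ is equivalent to the $1$-periodization $\sum_k\chi_E(x+k)e^{-2\pi ic(x+k)}$ being zero a.e., and since $E\subset[0,3/2-\epsilon]$ allows only $k\in\{0,1\}$ to contribute when $x\in[0,1)$, a case analysis forces $E=A\cup(A+1)$ with $A\subset[0,1/2-\epsilon)$ and $|A|=1/2$, contradicting $|A|\le 1/2-\epsilon$. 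Since the zero set of $\hat\chi_F$ is discrete and $Y$ has positive lower density, a short chaining argument propagates $t_0=0$ to all of $Y$. Thus $\Lambda=\Z\times Y$, and the completeness relation factorizes into $\sum_{y\in Y}|\hat\chi_F(\xi_2-y)|^2=1$, showing $Y$ is a spectrum of $F$.
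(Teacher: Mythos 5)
Your proposal has a genuine gap at exactly the point you flag as the main difficulty: the claim that every fiber $\Lambda_y$ is a \emph{complete} spectrum of $E$, i.e.\ that $g_y\equiv 1$ a.e. The ``density/averaging argument'' you sketch does not close this. Averaging the completeness relation over $\xi_2$ gives $\frac{1}{2T}\sum_{y\in Y\cap[-T,T]}g_y(\xi_1)\to 1$, which together with $g_y\le 1$ only forces $Y$ to have lower density at least $1$ and the $g_y$ to equal $1$ \emph{on average}; it is consistent with, say, $Y$ having density $2$ and every $g_y\equiv 1/2$. The dual bound $h_x\le 1$ does not rescue this, because the second coordinates of $\Lambda$ (counted with the multiplicity coming from distinct fibers) need not be $F$-orthogonal: orthogonality of $(x,y)$ and $(x',y')$ can be carried entirely by $\hat{\chi_E}(x-x')=0$. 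The failure of fibers of a spectrum of $E\times F$ to be spectra of the factors is precisely the obstruction that makes Conjecture 1.1 hard, and false in general by the counterexample of \cite{Som24}; so the hypothesis $E\subset[0,3/2-\epsilon]$ must enter this step in an essential, quantitative way, and your sketch does not say how. Your later steps (the coset-vanishing computation showing $t_y-t_{y'}\in\Z$, which is in substance the paper's Lemma 3.2, and the factorization of the completeness relation once $\Lambda=\Z\times Y$) are reasonable, but they all sit downstream of the unproved claim, so the proof is incomplete at its core.

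For comparison, the paper avoids analyzing the fiber structure of the spectrum altogether. It converts spectrality of $E\times F$ into the weak tiling condition (Theorem 2.4), factors the weak tiling through the product (Theorem 2.5) to conclude that $E$ weakly tiles its complement in $\R$, and then shows by a direct measure-theoretic argument (Proposition 3.1) that the only admissible weak tiling measure is $\sum_{k\in\Z\smallsetminus\{0\}}\delta_k$; hence $E+\Z$ is a tiling and $E$ is spectral with spectrum $\Z$ by Fuglede's lattice result. Spectrality of $F$ is then obtained by contradiction rather than by exhibiting a fiber: Kolountzakis's criterion (Theorem 2.3) applied to the domains $D_n=(0,t_0)\cup(n-1+t_0,n)$ would, if $F$ were not spectral, produce a zero of $\hat{\chi_E}$ in every interval $(n-1,n)$, and Jensen's formula shows an entire function with the growth of $\hat{\chi_E}$ cannot have that many zeros. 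If you wish to pursue your route, you need a genuinely new argument for the fiber claim; otherwise the weak-tiling detour is what makes the theorem provable.
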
 

To show that $E$ is spectral, we use a ``weak tiling" condition, which was first applied to Fuglede's conjecture in \cite{LM22}. A theorem proven in \cite{K16} tells us that if $F$ were not spectral, then $\hat{\chi_E}$ would admit a certain sequence of roots. We then derive a contradiction using Jensen's formula.

\subsection*{Acknowledgement}

We would like to thank our mentor, Bobby Wilson, for his support. The authors both participated in an REU led by Professor Wilson in the summer of 2024, and he continued to provide valuable advice on this project throughout the year. The REU was funded by NSF CAREER Fellowship, DMS 2142064.


\section{Preliminaries and Results from the Literature}

We will begin by discussing five important results in the literature that we will need for our proof. 

The first result we need is the following lemma, proven in \cite{KL02}. This lemma is crucial for their proof of Theorem \ref{thm_KL}, and the proof of the lemma simply relies on taking cases on if $t$ is in $[0, \frac12], (\frac12, \frac34], $ or $(\frac34, 1)$ and arguing by contradiction that there isn't enough ``space" if there is no overlap.

\begin{lemma}
    Suppose that $E\subseteq[0,L]$ is measurable with measure $1$ and that $L=3/2-\epsilon$ for some $\epsilon>0$. Then $m(E\cap(E+t))>0$ whenever $0\leq t<1$.
\end{lemma}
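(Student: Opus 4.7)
The plan is to argue by contradiction: if $m(E \cap (E+t)) = 0$ for some $t \in [0,1)$, then $E$ and $E+t$ are essentially disjoint and $m(E \cup (E+t)) = 2$. I would split on the size of $t$, and handle $t \in [0, 1/2]$ by a pure volume count: $E \cup (E+t) \subseteq [0, L+t] \subseteq [0, 2-\epsilon]$ has measure strictly less than $2$, a contradiction.

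For $t \in (1/2, L/2]$ (note $L/2 = 3/4 - \epsilon/2$, so $L-t \ge t$ and $2t \le L$), I would partition $[0,L] = [0,t] \cup [t, L-t] \cup [L-t, 2t] \cup [2t, L]$ and write $a, b_1, b_2, c$ for the measures of $E$ on the four pieces, so $a + b_1 + b_2 + c = 1$. Shifting by $t$ gives $|(E+t) \cap [t, 2t]| = a$ and $|(E+t) \cap [2t, L]| = b_1$, while $|E \cap [t, 2t]| = b_1 + b_2$ and $|E \cap [2t, L]| = c$. Essential disjointness of $E$ and $E+t$ on each of these two sub-intervals yields
\[
a + b_1 + b_2 \le t \quad\text{and}\quad b_1 + c \le L - 2t,
\]
which sum to $1 + b_1 \le L - t$, i.e., $b_1 \le 1/2 - \epsilon - t < 0$, a contradiction.

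For $t \in (L/2, 1)$ I use the coarser partition $[0, L-t] \cup [L-t, t] \cup [t, L]$ (now $L-t < t$), with $\alpha, \beta, \gamma$ the corresponding measures of $E$. Since $|(E+t) \cap [t, L]| = \alpha$ and $|E \cap [t, L]| = \gamma$, disjointness on $[t, L]$ forces $\alpha + \gamma \le L - t$, so $\beta \ge 1 - (L - t)$. Combined with $\beta \le 2t - L$ (the length of the middle interval), this gives $t \ge 1$, contradicting $t < 1$.

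The main obstacle is choosing the right partition in each range and keeping careful track of which portion of $E+t$ lands in which sub-interval under the shift; once that bookkeeping is pinned down, each contradiction reduces to elementary linear arithmetic.
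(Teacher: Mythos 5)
Your proof is correct, and it follows essentially the same strategy the paper attributes to the original source: split $t$ into low, middle, and high ranges and derive a contradiction from a measure count showing there is not enough room in $[0,L]$ for $E$ and $E+t$ to be essentially disjoint. The only cosmetic difference is that your case breakpoint is $L/2=3/4-\epsilon/2$ rather than $3/4$, and each of your packing inequalities and the resulting linear arithmetic checks out.
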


This next result is proven in  \cite{K99}. We will eventually be able to show that, for some discrete set $\Lambda$ containing zero, $E + \Lambda$ is a tiling and $[0,1] + \Lambda$ is a packing. This theorem will allow us to say that $[0,1] + \Lambda$ is then a tiling, which forces $\Lambda = \Z$ to be a tiling set for $E$. 

\begin{theorem}
    If $f,g\ge 0$, $\int f(x)dx=\int g(x)dx=1$, and both $f+\Lambda$ and $g+\Lambda$ are packings of $\mathbb{R}^d$, then $f+\Lambda$ is a tiling if and only if $g+\Lambda$ is a tiling.
\end{theorem}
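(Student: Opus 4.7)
The plan is to reduce both directions of the equivalence to an intrinsic density condition on $\Lambda$ that is independent of the particular function ($f$ or $g$) being placed at each point of $\Lambda$. Writing $f_\Lambda(x) := \sum_{\lambda \in \Lambda} f(x - \lambda)$ and swapping the sum with the integral,
\[
\int_{B_R(x)} f_\Lambda(y)\, dy \;=\; \sum_{\lambda \in \Lambda} \int_{B_R(x-\lambda)} f(z)\, dz.
\]
The inner integral equals $\int f = 1$ whenever $B_R(x-\lambda)$ absorbs essentially all of the mass of $f$, and is negligible when $|x-\lambda| \gg R$. A boundary-layer estimate gives
\[
\int_{B_R(x)} f_\Lambda\, dy \;=\; \#\bigl(\Lambda \cap B_R(x)\bigr) + o(|B_R|),
\]
with the error depending on $f$ only through its tails; the identical identity holds for $g$.

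Now suppose $f + \Lambda$ is a tiling, so $f_\Lambda = 1$ a.e.\ and the left-hand side above equals $|B_R|$ for \emph{every} center $x$. We conclude $\#(\Lambda \cap B_R(x))/|B_R| \to 1$ uniformly in $x$, i.e., $\Lambda$ has Banach density one. Substituting this into the formula for $g$ yields $\int_{B_R(x)} g_\Lambda\, dy = |B_R|(1 - o(1))$ uniformly in $x$. Combined with the packing bound $g_\Lambda \le 1$ a.e., we obtain $\int_{B_R(x)}(1 - g_\Lambda)\, dy = o(|B_R|)$ uniformly in $x$, and since $0 \le 1 - g_\Lambda \le 1$ a.e., this forces $g_\Lambda = 1$ a.e., i.e., $g + \Lambda$ is a tiling. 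The reverse implication follows by symmetry.

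The main obstacle is the last step: deducing $1 - g_\Lambda = 0$ almost everywhere from the vanishing of its averages over all large balls. For an arbitrary bounded nonnegative function, such vanishing does not imply pointwise vanishing, since mass could concentrate on sets of density zero but positive Lebesgue measure. The key is \emph{uniformity} over all centers $x$: the tiling identity for $f$ yields Banach density, not merely natural density, and this in turn provides uniform ball-average decay for the deficit $1 - g_\Lambda$ which, together with its pointwise $L^\infty$ bound, rules out any such concentration. If the boundary-term analysis underlying the uniformity proves too delicate, I would instead pass to the auxiliary function $h := f - g \in L^1$ with $\int h = 0$: one then has $h_\Lambda = 1 - g_\Lambda \ge 0$ a.e., and the cancellation coming from $\int h = 0$ reduces $\int_{B_R(x)} h_\Lambda\, dy$ to a pure boundary contribution, from which $h_\Lambda \equiv 0$ can be extracted by a careful Fubini-type averaging.
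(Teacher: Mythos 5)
The paper itself gives no proof of this statement (it is quoted as Theorem 2.2 from \cite{K99}), so your argument has to stand on its own, and it does not: the final step contains a genuine gap. Your reduction to uniform (Banach) density is fine as far as it goes --- the packing hypothesis gives $\Lambda$ uniformly bounded density, which controls the tail terms, and the tiling hypothesis for $f$ then yields $\#(\Lambda\cap B_R(x))/|B_R|\to 1$ uniformly in $x$. But from ``$\int_{B_R(x)}(1-g_\Lambda)\,dy=o(|B_R|)$ uniformly in $x$'' together with $0\le 1-g_\Lambda\le 1$ a.e.\ you cannot conclude $g_\Lambda=1$ a.e. The function $u=1-\chi_{B_1(0)}$ satisfies $0\le u\le 1$ and $\int_{B_R(x)}(1-u)\,dy\le|B_1(0)|=o(|B_R|)$ uniformly in \emph{every} center $x$, yet $u\neq 1$ on a set of positive measure; so uniformity over centers does not rule out a localized deficit, contrary to what you assert. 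The fallback via $h=f-g$ has the same defect: it again only bounds ball averages of the nonnegative function $h_\Lambda=1-g_\Lambda$ (by a boundary term of order $R^{d-1}$ at best), which is still compatible with a nonzero deficit.

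The repair is to test the deficit against $f$ itself rather than against normalized indicators of balls. Writing $\delta_\Lambda=\sum_{\lambda\in\Lambda}\delta_\lambda$ (a locally finite measure, by the packing bound), Tonelli gives
$$f*(g*\delta_\Lambda)=(f*g)*\delta_\Lambda=g*(f*\delta_\Lambda)=g*1=1\quad\text{a.e.},$$
hence $\int f(y)\bigl(1-g_\Lambda(x-y)\bigr)\,dy=0$ for a.e.\ $x$. Both factors in the integrand are nonnegative for a.e.\ $y$, so for a.e.\ $x$ we get $g_\Lambda=1$ a.e.\ on $x-\{f>0\}$; integrating $\chi_{\{f>0\}}(y)\,\chi_{\{g_\Lambda<1\}}(x-y)$ over $(x,y)$ and applying Fubini yields $|\{f>0\}|\cdot|\{g_\Lambda<1\}|=0$, and since $\int f=1$ forces $|\{f>0\}|>0$, we conclude $g_\Lambda=1$ a.e. In effect, the correct averaging kernel is $f$ (transported by $\delta_\Lambda$), not $\chi_{B_R}/|B_R|$; your density computation is a correct but strictly weaker consequence of this identity and cannot by itself localize the deficit.
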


The third result below is proven in \cite{K16} by Kolountzakis, though a simpler proof of a stronger version of this theorem can be found in Section 4 of \cite{GL20}. It will be key for us to show that the second factor $F$ in $E \times F$ is spectral, after showing that $E$ is spectral. Kolountzakis uses it solely with $D = (-1/2,1/2)$ in \cite{K16}, but the theorem is general enough to allow us to use it with varying sets $D_n$. By supposing for contradiction that $F$ is not spectral and translating $D_n$, we are able to produce a sequence of zeros of $\hat{\chi_E}$. We will use this fact to force a contradiction. 

\begin{theorem}
    Suppose $\Omega=E\times F\subset\mathbb{R}^m\times\mathbb{R}^n$ has $m(E)=m(F)=1$, and suppose also that the bounded set $D\subset \mathbb{R}^m$ is such that
    $$(D-D)\cap\{\hat{\chi_E}=0\}=\varnothing.$$

    \begin{enumerate}
        \item If $m(D)=1$ and $\Omega$ is spectral then $F$ is also spectral.
        \item If $m(D)>1$ then neither $E$ nor $\Omega$ is spectral.
    \end{enumerate}

\end{theorem}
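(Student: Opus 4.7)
The approach centers on the Parseval identity
\[ \sum_{\lambda\in\Lambda_A}|\hat{\chi_A}(\xi-\lambda)|^2 = m(A),\qquad \xi\in\R^k, \]
valid whenever $\Lambda_A$ is a spectrum of a unit-measure bounded set $A\subset\R^k$. I will apply this in two ways: directly on $E$ to rule out $E$ being spectral in part (2), and on $\Omega=E\times F$ (where the kernel factors, so the identity becomes $\sum_\lambda|\hat{\chi_E}(\xi_1-\lambda_1)|^2|\hat{\chi_F}(\xi_2-\lambda_2)|^2=1$) for the claims about $\Omega$ and about $F$.

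For part (2) with $E$: suppose $\Lambda_E$ is a spectrum of $E$. Integrating the Parseval identity over $\xi\in D$ and substituting $u=\xi-\lambda$ gives $m(D)=\int_{\R^m}|\hat{\chi_E}(u)|^2 N(u)\,du$, where $N(u):=\#\{\lambda\in\Lambda_E:\lambda+u\in D\}$. If $\lambda\neq\lambda'$ both lie in $D-u$, then $\lambda-\lambda'\in D-D$ is also a zero of $\hat{\chi_E}$ by the orthogonality of the spectrum, contradicting the hypothesis on $D$; so $N\le 1$ and $m(D)\le\int|\hat{\chi_E}|^2=1$ by Plancherel. For part (2) with $\Omega$ (and for part (1)), let $\Lambda$ be a spectrum of $\Omega$. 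Fix $\xi_2\in\R^n$ and integrate the factorized identity in $\xi_1$ over $D$ to get
\[ m(D) = \int_{\R^m}|\hat{\chi_E}(u)|^2\,\tilde M_{\xi_2}(u)\,du,\qquad \tilde M_{\xi_2}(u) := \sum_{\substack{\lambda\in\Lambda\\\lambda_1+u\in D}}|\hat{\chi_F}(\xi_2-\lambda_2)|^2. \]
The crux is the pointwise bound $\tilde M_{\xi_2}(u)\le 1$. I would prove it by showing that the section $S(u):=\{\lambda_2:(\lambda_1,\lambda_2)\in\Lambda,\ \lambda_1+u\in D\}$ indexes an orthogonal family $\{e_s\chi_F:s\in S(u)\}$ in $L^2(F)$. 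For distinct contributors $\lambda\neq\lambda'$, orthogonality on $\Omega$ reads $\hat{\chi_E}(\lambda_1-\lambda_1')\hat{\chi_F}(\lambda_2-\lambda_2')=0$; a case analysis (using $\hat{\chi_E}(0)=1$ when $\lambda_1=\lambda_1'$, and the $D$-hypothesis when $\lambda_1\neq\lambda_1'$, since $\lambda_1-\lambda_1'\in D-D$) forces $\hat{\chi_F}(\lambda_2-\lambda_2')=0$ with $\lambda_2\neq\lambda_2'$. Bessel's inequality then yields $\tilde M_{\xi_2}(u)\le 1$, so $m(D)\le 1$, and $m(D)>1$ is impossible. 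This disposes of part (2).

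For part (1), the case $m(D)=1$ saturates all inequalities, so $(1-\tilde M_{\xi_2}(u))|\hat{\chi_E}(u)|^2=0$ a.e.\ for each $\xi_2$; since $\hat{\chi_E}$ is entire with $\hat{\chi_E}(0)=1$, its zero set is null, hence $\tilde M_{\xi_2}(u)=1$ for a.e.\ $u$ for every $\xi_2$. A Fubini argument in $(\xi_2,u)$ then produces a single $u_0$ with $\tilde M_{\xi_2}(u_0)=1$ for a.e.\ $\xi_2\in\R^n$. At this $u_0$, equality in Bessel means $e_{\xi_2}\chi_F$ lies in $V:=\overline{\mathrm{span}}\{e_s\chi_F:s\in S(u_0)\}$ for a.e.\ $\xi_2$; by closedness of $V$ and $L^2(F)$-continuity of $\xi_2\mapsto e_{\xi_2}\chi_F$, this holds for every $\xi_2$. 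Since $\{e_{\xi_2}\chi_F\}_{\xi_2\in\R^n}$ spans $L^2(F)$, we get $V=L^2(F)$, so $S(u_0)$ is a spectrum for $F$. The main obstacle throughout is the pointwise estimate $\tilde M_{\xi_2}(u)\le 1$: it is precisely there that the hypothesis $(D-D)\cap\{\hat{\chi_E}=0\}=\varnothing$ enters non-trivially, to prevent distinct spectrum points from generating collisions on $S(u)$ that would spoil the induced $F$-orthogonality.
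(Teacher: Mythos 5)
This is Theorem 2.3, which the paper quotes from \cite{K16} (with a stronger version in \cite{GL20}) without reproducing a proof, so there is no in-paper argument to compare against. Your proof is correct and is essentially the standard argument from those sources: you integrate the tight-frame identity $\sum_{\lambda}|\hat{\chi_\Omega}(\xi-\lambda)|^2=1$ over $D$ in the first variable, use the hypothesis $(D-D)\cap\{\hat{\chi_E}=0\}=\varnothing$ together with orthogonality on $\Omega$ to show that the fibers $S(u)$ index orthogonal exponentials for $F$, bound by Bessel and Plancherel to get $m(D)\le 1$ (which disposes of part (2)), and extract a spectrum for $F$ from the a.e.\ equality case when $m(D)=1$ via Fubini and the density of exponentials in $L^2(F)$.
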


The fourth result Theorem 2.4 is proven in \cite{LM22} and is an important tool in their proof of Fuglede's conjecture for convex bodies. They show that spectrality of $\Omega$ implies that $\Omega$ ``weakly tiles" its complement $\Omega^c$. This link between spectrality and a notion of tiling has been useful far beyond their original paper. The fifth result below it, Theorem 2.5, is proven in \cite{KLM}. This result establishes that Conjecture 1.1 is true if the condition of spectrality is replaced by the weak tiling condition of Theorem 2.4. We will use both of these results in conjunction to show that $E$ and $F$ both weakly tile their complements, assuming that $E \times F$ is spectral. 

\begin{theorem}
    Let $\Omega$ be a bounded, measurable set in $\mathbb{R}^d$. If $\Omega$ is spectral, then its complement $\Omega^c=\mathbb{R}^d\smallsetminus\Omega$ admits a weak tiling by translates of $\Omega$. That is, there exists a positive, locally finite measure $\mu$ such that $\chi_\Omega*\mu=\chi_{\Omega^c}$ a.e.
\end{theorem}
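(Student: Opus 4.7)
The plan is to construct the required weak tiling measure as a Fourier-dual object built from the spectrum, and then verify positivity and the tiling relation separately. Let $\Lambda \subset \R^d$ be a spectrum for $\Omega$, which we may assume contains $0$. Spectrality encodes two facts: orthogonality gives $\wh{\chi_\Omega}(\lambda) = 0$ for every $\lambda \in \Lambda \setminus \{0\}$, while Parseval applied to the modulated indicators $e^{2\pi i \xi \cdot x}\chi_\Omega(x) \in L^2(\Omega)$ yields the pointwise completeness identity
\[
    \sum_{\lambda \in \Lambda} |\wh{\chi_\Omega}(\xi - \lambda)|^2 = |\Omega|^2, \qquad \xi \in \R^d.
\]

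Let $r(x) := m(\Omega \cap (\Omega + x)) = (\chi_\Omega * \chi_{-\Omega})(x)$ denote the autocorrelation of $\chi_\Omega$; it is continuous, non-negative, compactly supported in $\Omega - \Omega$, with $r(0) = |\Omega|$ and Fourier transform $\wh{r} = |\wh{\chi_\Omega}|^2 \ge 0$. Writing $\nu := \sum_{\lambda \in \Lambda} \delta_\lambda$, the completeness identity becomes $\wh{r} * \nu \equiv |\Omega|^2$, and taking the inverse Fourier transform in the tempered distribution sense produces the dual identity $r \cdot T = |\Omega|^2 \delta_0$ on $\R^d$, where $T$ denotes the inverse Fourier transform of $\nu$. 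Since $r$ is continuous with $r(0) > 0$, this forces $T$ to agree with $|\Omega|\delta_0$ on the open neighborhood of the origin where $r > 0$. I then define the candidate measure $\mu := |\Omega|^{-1}(T - |\Omega|\delta_0)$; by construction $\mu$ is supported on the closed ``packing set'' $P := \{x : m(\Omega \cap (\Omega + x)) = 0\}$. The tiling identity follows formally: orthogonality gives $\wh{\chi_\Omega} \cdot \nu = |\Omega|\delta_0$ as a tempered distribution, so $\chi_\Omega * T \equiv |\Omega|$, and therefore $\chi_\Omega * \mu = 1 - \chi_\Omega = \chi_{\Omega^c}$ almost everywhere.

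The principal obstacle is verifying that $\mu$ is a positive, locally finite Radon measure, rather than merely a tempered distribution. Since $\nu$ is a positive measure, $T$ is a positive-definite distribution by the Bochner--Schwartz theorem, but in general the inverse Fourier transform of a discrete point-mass measure need not itself be a measure. One upgrades from positive-definiteness to positivity as a Radon measure by an approximation argument: convolving $T$ with a non-negative, even, Fourier-positive Schwartz bump $\phi_\varepsilon$, the smoothing $T * \phi_\varepsilon$ is a non-negative smooth function (its Fourier transform being the non-negative measure $\wh{\phi_\varepsilon} \cdot \nu$), and the completeness identity $\wh{r} * \nu \equiv |\Omega|^2$ delivers uniform local $L^1$ bounds on these smoothings. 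Extracting a weak-$*$ limit realizes $T - |\Omega|\delta_0$, and hence $\mu$, as a positive locally finite Radon measure supported in $P$. This passage from distributional positive-definiteness to measure-positivity, with the completeness identity supplying the required uniform bounds, is the technical heart of the argument.
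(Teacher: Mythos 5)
The paper itself offers no proof of this statement: it is Theorem~2.4, quoted verbatim from Lev and Matolcsi \cite{LM22}, so there is nothing in-paper to compare against. Judged on its own terms, your argument has a fatal gap at exactly the point you identify as ``the technical heart,'' namely the positivity of $\mu$. Your candidate is $\mu=|\Omega|^{-1}(T-|\Omega|\delta_0)$ with $T$ the inverse Fourier transform of $\nu=\sum_{\lambda\in\Lambda}\delta_\lambda$, and you argue that the smoothings $T*\phi_\varepsilon$ are non-negative \emph{because their Fourier transforms} $\wh{\phi_\varepsilon}\cdot\nu$ are non-negative measures. That inference is false: a function whose Fourier transform is a non-negative finite measure is \emph{positive-definite}, not pointwise non-negative (e.g.\ $\cos(2\pi x)$ has Fourier transform $\tfrac12(\delta_1+\delta_{-1})\geq 0$ yet takes negative values). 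Positivity of $\nu$ buys positive-definiteness of $T$, and these two properties are Fourier-dual to one another, not equivalent; no smoothing-and-weak-$*$-limit argument can convert one into the other. A secondary but real issue is that the identity $r\cdot T=|\Omega|^2\delta_0$ requires multiplying the tempered distribution $T$ by the merely continuous function $r$, which is not defined until after you know $T$ is a measure.

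Worse, the statement you are trying to establish about $T$ is simply false, as the example in the paper's own introduction shows. Take $\Omega=[0,1/2)\cup[1,3/2]$ with spectrum $\Lambda=\{0,1/2\}+2\Z$. Then $\nu=(\delta_0+\delta_{1/2})*\delta_{2\Z}$, so by Poisson summation
\[
T=(1+e^{\pi i x})\cdot\tfrac12\sum_{k\in\Z}\delta_{k/2}
=\sum_{m\in\Z}\delta_{2m}+\tfrac{1+i}{2}\sum_{m\in\Z}\delta_{2m+1/2}+\tfrac{1-i}{2}\sum_{m\in\Z}\delta_{2m+3/2},
\]
a genuinely complex measure. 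One can check that $\chi_\Omega*T=1$ a.e.\ (the imaginary parts cancel), so your tiling identity survives, but $T-\delta_0$ is not a positive measure and never will be; the weak tiling measure whose existence the theorem asserts (here $\sum_{\lambda\in\Lambda\setminus\{0\}}\delta_\lambda$ works, since $\Lambda$ is also a tiling complement) is a different object from $T-\delta_0$. The actual proof in \cite{LM22} avoids this trap by building positivity in from the start: it forms manifestly positive measures whose atoms carry the Parseval weights $|\wh{\chi_\Omega}(\lambda+t)|^2\geq 0$ (with total mass controlled by exactly your completeness identity $\sum_\lambda|\wh{\chi_\Omega}(\xi-\lambda)|^2=|\Omega|^2$), and extracts $\mu$ as a weak-$*$ limit of these, positivity being preserved in the limit. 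Your ingredients --- orthogonality, the completeness identity, the autocorrelation $r$ --- are the right ones, but they must enter as non-negative weights on point masses rather than through inverting $\delta_\Lambda$.
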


\begin{theorem}
    Let $E\subset \mathbb{R}^n$ and $F\subset\mathbb{R}^m$ be two bounded, measurable sets. Then the product set $E\times F$ can weakly tile its complement in $\mathbb{R}^n\times\mathbb{R}^m$ by translations if and only if both $E$ and $F$ weakly tile their complements in $\mathbb{R}^n$ and $\mathbb{R}^m$, respectively.
\end{theorem}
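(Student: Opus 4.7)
The plan is to treat the two implications separately: the forward direction is a clean product-measure construction, while the reverse direction relies on marginalizing the weak tiling of $E\times F$ in one variable to recover a weak tiling for the other factor.

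For the forward direction, suppose positive locally finite measures $\nu_E,\nu_F$ satisfy $\chi_E*\nu_E=\chi_{E^c}$ a.e.\ on $\R^n$ and $\chi_F*\nu_F=\chi_{F^c}$ a.e.\ on $\R^m$. Set $\mu_E^*:=\delta_0+\nu_E$ and $\mu_F^*:=\delta_0+\nu_F$, so that $\chi_E*\mu_E^*=\chi_F*\mu_F^*=1$ a.e. The tensor product $\mu_E^*\otimes\mu_F^*$ is positive and locally finite, and because the product characteristic function factorizes, Fubini yields
$$\chi_{E\times F}*(\mu_E^*\otimes\mu_F^*)(x,y)=(\chi_E*\mu_E^*)(x)\,(\chi_F*\mu_F^*)(y)=1 \text{ a.e.}$$
Since $(\delta_0+\nu_E)\otimes(\delta_0+\nu_F)-\delta_{(0,0)}=\delta_0\otimes\nu_F+\nu_E\otimes\delta_0+\nu_E\otimes\nu_F$ is a sum of positive measures, setting $\nu$ equal to it produces a positive locally finite measure with $\chi_{E\times F}*\nu=1-\chi_{E\times F}=\chi_{(E\times F)^c}$, the desired weak tiling.

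For the reverse direction, assume $\chi_{E\times F}*\nu=\chi_{(E\times F)^c}$ a.e., and write $\tau:=\delta_{(0,0)}+\nu$, a positive locally finite measure with $\chi_{E\times F}*\tau=1$ a.e.\ on $\R^{n+m}$. In the nondegenerate case $m(E),m(F)>0$ (the other cases are vacuous since then neither side of the equivalence can hold), fix a bounded measurable $A\subseteq F$ with $m(A)>0$. Integrating the identity $\chi_{E\times F}*\tau(x,y)=1$ over $y\in A$ and applying Fubini---valid since, for $x$ ranging in any fixed ball, the integrand is supported on the bounded set $(x-E)\times(A-F)$ on which $\tau$ has finite mass---yields, with $h_A(t):=m(A\cap(F+t))$,
$$\int\chi_E(x-s)\,h_A(t)\,d\tau(s,t)=m(A) \quad \text{ a.e.\ in } x.$$
Define $\mu_A$ on $\R^n$ by $\mu_A(B):=\int_{B\times\R^m}h_A(t)\,d\tau(s,t)$; since $h_A\le m(A)$ and $h_A$ is supported in the bounded set $A-F$, $\mu_A$ is positive and locally finite, and the displayed identity reads $\chi_E*\mu_A=m(A)$ a.e.

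Decomposing $\tau=\delta_{(0,0)}+\nu$ splits $\mu_A$ as $h_A(0)\,\delta_0+\nu_A=m(A)\,\delta_0+\nu_A$ (using $A\subseteq F$, so that $A\cap F=A$), where $\nu_A$ is a positive locally finite measure on $\R^n$ built from the $\nu$ part. Dividing by $m(A)$ in $\chi_E*\bigl(m(A)\,\delta_0+\nu_A\bigr)=m(A)$ gives $\chi_E*(\nu_A/m(A))=\chi_{E^c}$ a.e., so $E$ weakly tiles its complement, and a symmetric argument with a bounded set $B\subseteq E$ handles $F$. I expect the main obstacle to be bookkeeping rather than conceptual: carefully verifying the local finiteness of the auxiliary measures and justifying the Fubini exchange, which both come down to exploiting the boundedness of $E$ and $F$ together with the local finiteness of $\tau$.
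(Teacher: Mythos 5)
The paper does not prove this statement---it is quoted as Theorem 2.5 from \cite{KLM}---so there is no internal proof to compare against; your argument is correct and follows essentially the same standard route as the cited source: a tensor product of the two weak tilings (after adding $\delta_0$ to each) for the ``if'' direction, and for the ``only if'' direction a marginalization of the product measure weighted by the cross-correlation $h_A=\chi_A*\chi_{-F}$, with the atom at the origin peeled off at the end. The Fubini justifications and local-finiteness checks you flag are handled correctly by the boundedness of $E$, $F$ and the local finiteness of $\tau$, and your dismissal of the measure-zero cases is sound since $\chi_E*\mu=0$ a.e.\ whenever $m(E)=0$.
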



\section{Main Arguments}\label{s:main}

\subsection{Perturbation of interval admits lattice tiling} 

\begin{proposition}
Let $m(E)=1$, $\sup E=\esssup E$, $\inf E=\essinf E$, and $E\subset [0,L]$, where $L=3/2-\epsilon$ for some $\epsilon>0$. Suppose $E$ weakly tiles its complement with respect to a locally finite measure $\mu$. Then $\mu=\sum_{k\in\mathbb{Z}\smallsetminus \{0\}}\delta_k$.
\end{proposition}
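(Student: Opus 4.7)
My plan is to set $\nu = \delta_0 + \mu$, which converts the weak tiling identity $\chi_E * \mu = \chi_{E^c}$ into the cleaner tiling identity $\chi_E * \nu = 1$ a.e. First I would verify that $\mu(\{0\}) = 0$: otherwise $\chi_E * \mu$ would contain the strictly positive term $\mu(\{0\}) \chi_E$ on $E$, contradicting $\chi_E * \mu = 0$ there; hence $\nu(\{0\}) = 1$. The target is then to show $\nu = \sum_{k \in \Z} \delta_k$.

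The first real step is to establish a support gap of size $1$ around the origin via an autocorrelation trick. Let $g(t) = m(E \cap (E + t)) = (\chi_E * \wc{\chi_E})(t)$, a continuous, nonnegative, compactly supported function with $g(0) = 1$; by Lemma~2.1 we have $g > 0$ on $(-1, 1)$. Convolving $\chi_E * \nu = 1$ with $\wc{\chi_E}$ and using Fubini yields $g * \nu \equiv 1$ pointwise. Evaluating at $0$:
\[
1 = (g * \nu)(0) = g(0)\nu(\{0\}) + \int_{y \ne 0} g(y)\, d\nu(y) = 1 + \int_{y \ne 0} g(y)\, d\nu(y),
\]
so the residual integral vanishes. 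Since $g > 0$ on $(-1, 1)$ and $\nu \geq 0$, this forces $\nu\bigl((-1, 1) \setminus \{0\}\bigr) = 0$; equivalently, $\spt \mu \subset \R \setminus (-1, 1)$.

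Next I would upgrade the support gap into full atomic structure and then invoke Theorem~2.2. After translating so that $\inf E = 0$ (and writing $b = \sup E \in [1, L]$, noting $b \geq 1$ since $m(E) = 1$), the key observation is: for $x \in (b - 1, 1)$, the reflected set $x - E$ sits inside $(-1, 1)$ where $\nu$ is concentrated at $\{0\}$, so $\chi_E * \nu(x) = \chi_E(x)$, and the identity $\chi_E * \nu = 1$ then forces $(b - 1, 1) \subset E$ a.e. Extending this local analysis past the first positive support point $\lambda_+ = \inf(\spt \nu \cap (0, \infty)) \in [1, b]$ (and symmetrically on the negative side), while re-applying the autocorrelation argument at each identified unit-mass atom to produce an analogous size-$1$ gap around it, should yield $\nu = \sum_{\lambda \in \Lambda} \delta_\lambda$ for some $1$-separated $\Lambda \ni 0$. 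At that point $\chi_E * \nu = 1$ reads as $E + \Lambda$ tiling $\R$; $1$-separation of $\Lambda$ makes $[0, 1] + \Lambda$ a packing; and Theorem~2.2, applied with the functions $\chi_E$ and $\chi_{[0, 1]}$, upgrades this to $[0, 1] + \Lambda$ being a tiling, forcing $\Lambda = \Z$ (the unique discrete set containing $0$ with which $[0, 1]$ tiles $\R$), and hence $\mu = \sum_{k \ne 0} \delta_k$.

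The main obstacle I anticipate is precisely the middle step: going from the clean local support gap around $0$ to the global atomic structure of $\nu$. An alternative route that sidesteps an explicit induction would be to establish the packing bound $\chi_{[0, 1]} * \nu \leq 1$ a.e.\ directly from the size-$1$ gap, then use Theorem~2.2 to promote it to $\chi_{[0, 1]} * \nu \equiv 1$, and finally deduce $\nu = \sum_k \delta_k$ from the identity $\nu\bigl([x - 1, x]\bigr) = 1$ combined with $\nu(\{0\}) = 1$ via a monotonicity argument on the cumulative distribution $r \mapsto \nu([0, r])$.
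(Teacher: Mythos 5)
Your opening and closing moves coincide with the paper's. The autocorrelation step (convolving the tiling identity with $\chi_{-E}$, using Lemma~2.1 to get $K>0$ on $(-1,1)$, evaluating at $0$, and concluding $\mu((-1,1))=0$) is exactly the paper's first paragraph, modulo the cosmetic normalization $\nu=\mu+\delta_0$; and the endgame (once $\mu+\delta_0=\sum_{\lambda\in\Lambda}\delta_\lambda$ with $\Lambda$ $1$-separated and containing $0$, use Theorem~2.2 to upgrade the packing $[0,1]+\Lambda$ to a tiling and conclude $\Lambda=\mathbb{Z}$) is also exactly the paper's. One minor point in the first step: $g*\nu=1$ holds a priori only a.e., and you need the continuity of $g*\nu$ (which follows from uniform continuity of $g$ and local finiteness of $\nu$) before you may evaluate at the single point $0$; the paper spells this out.

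The genuine gap is the step you yourself flag as the main obstacle: passing from the size-$1$ gap around the origin to the statement that $\nu$ is purely atomic with unit masses on a $1$-separated set. This is not a routine upgrade, and it is where the paper spends most of its effort. You cannot ``re-apply the autocorrelation argument at each identified unit-mass atom'' until you have first proved that the nearest support point of $\nu$ to the origin \emph{is} an atom of mass exactly $1$ --- a priori $\nu$ could have a diffuse part, or an atom of mass different from $1$, just outside $(-1,1)$. The paper handles this by introducing $r=\sup E$ and $t=\esssup([0,r-1]\cap E^c)$, first extending the gap to $\mu(t-r,1)=0$ via a second convolution kernel $g=\chi_E*\chi_{(1-r,-t)}$, then proving $\mu\{t-r\}\ge 1$ by taking points $y_n\in E^c$ increasing to $t$ with $\mu(y_n-E)=1$ and using continuity from above, proving $\mu\{t-r\}\le 1$ by exhibiting a point $x\in E^c\cap(E+t-r)$ with $\mu(x-E)=1$, and finally inducting by translating the measure by $t-r$. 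None of this is present in your sketch. Your proposed alternative route does not close the gap either: the packing bound $\chi_{[0,1]}*\nu\le 1$, i.e.\ $\nu([x-1,x])\le 1$ for all $x$, does not follow from the gap around the origin --- the identity $\nu(x-E)=1$ for a.e.\ $x$ only controls $\nu$ on the sets $x-E$, which need not contain a full unit interval unless $E$ is itself an interval. So the middle third of the argument, which is the substantive content of the proposition, is missing.
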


\begin{proof}
    We will first show that $\mu(-1,1)=0$. Let $K(t) = (\ind_E \ast \ind_{-E})(t) = \int \ind_E(y) \ind_{E}(y-t) \dy = \int \ind_{E \cap (E+t)}(y) \dy = m(E \cap (E+t))$. Then since $\ind_E, \ind_{-E}$ are in $L^2$, $K$ is uniformly continuous. $K$ is also bounded since $m(E \cap (E+t)) \le 1$, and it is compactly supported since $E \subset [0, \frac 32 - \epsilon]$ so for $|t| > \frac 3 2$, $K(t) = 0$. By Lemma 2.1, $K(t) > 0$ for all $t \in [0,1)$. And since Lebesgue measure is translation invariant, $K$ is even and thus $K>0$ on $(-1,1)$. By the Extreme Value Theorem, for each $n$ there exists $m_n>0$ such that $K(t) \ge m_n$ whenever $t \in [-1+\frac{1}{n}, 1 - \frac 1 n]$. 
    
    $E$ weakly tiles its complement with respect to $\mu$, so $\ind_E \ast \mu = \ind_{E^c}$ $m$-almost everywhere.   This means that $\ind_E \ast \mu \ast \ind_{-E} = \ind_{E^c} \ast \ind_{-E}$ a.e. But also, $\ind_E \ast \mu \ast \ind_{-E} = K \ast \mu$ a.e. So $\mu \ast K = \ind_{E^c} \ast \ind_{-E} = (1 - \ind_E) \ast \ind_{-E}$. Then $(1 \ast \ind_{-E})(t) = \int \ind_E(y-t) \dy =m(E)= 1$, so we get $\mu \ast K = 1 - K$ a.e. Since $K$ is uniformly continuous with compact support and $\mu$ is locally finite, the estimate $|(\mu \ast K)(x) - (\mu \ast K)(y)| \le \int |K(x-z) - K(y-z)| \dmu(z)$ tells us that $\mu*K$ is continuous. Since both $1-K$ and $\mu*K$ are continuous and are equal to one another a.e., they are equal everywhere. In particular, we have $(\mu \ast K)(0) = 1 - K(0) = 0$. Then for each $n$, $$0 = \int K(-y) \dmu(y) \ge \int_{[-1 + \frac 1 n, 1-\frac{1}{n}]} K(-y) \dmu(y) \ge m_n \: \mu([-1+\frac 1 n, 1-\frac{1}{n}]),$$ so $\mu([-1 + \frac 1 n, 1-\frac{1}{n}]) = 0$ for all $n$. By continuity from below, we get $\mu((-1, 1)) = 0$. 
    
    Let $r=\sup E=\esssup E$ and let $t=\esssup\left([0,r-1]\cap E^c\right)$. Assume that $t<r-1$. Let $g(x)=\chi_E*\chi_{(1-r,-t)}(x)=m(E\cap((t,r-1)+x))$. Then $g$ is uniformly continuous and $g*\mu$ is continuous by the same reasoning which showed that $K$ and $K*\mu$ were continuous and uniformly continuous, respectively. Also, $g>0$ on $(1,r-t)$ by the minimality of $r$, so for each $n$ there exists $m_n$ such that $g\ge m_n>0$ on $[1+1/n,r-t-1/n]$ by the Extreme Value Theorem. Moreover, because $\chi_E*\mu=1-\chi_E$ almost everywhere, $g*\mu=(1-\chi_E)*\chi_{(1-r,-t)}=r-1-t-g$ almost everywhere. Since $g$ and $g*\mu$ are both continuous, we have equality everywhere, and in particular $g*\mu(0)=r-1-t-g(0)$. And by the maximality of $t$, $g(0)=m(E\cap(t,r-1))=r-t-1$, so $g*\mu(0)=0$. So for any $n$,
    $$0=g*\mu(0)=\int g(-y)d\mu(y)\ge \int_{[t-r+1/n,-1-1/n]}g(-y)d\mu(y)\ge m_n\mu[t-r+1/n,-1-1/n],$$
    which shows that $\mu(t-r,-1)=0$. Furthermore,
    $$\{x\in (t,r-1):-1\in x-E\}=\left((t+1,r)\cap E\right)-1,$$
    and since $m((t+1,r)\cap E)>0$, $\{x\in (t,r-1):-1\in x-E\}$ has positive measure. And since almost all $x\in (t-1,r)$ are contained in $E$ and almost all $x\in E$ satisfy $\mu(x-E)=0$, this shows that $\mu\{-1\}=0$. Thus $\mu(t-r,-1]=\mu(t-r,1)=0$.

    Assume now that $t\leq r-1$. It is still true, then, that $\mu(t-r,1)=0$. By the minimality of $t$, we know that $m(E^c\cap (t-\delta,t))>0$ for any $\delta>0$. This means that we may choose a sequence $y_n$ which increases to $t$ such that $\mu(y_n-E)=1$ for each $n$. Then since $\mu(t-r,1)=0$, we have
    $$\mu(y_n-E)=\mu\left((y_n-E)\cap(-\infty,t-r]\right)\leq \mu(y_n-r,t-r],$$
    and so $\mu\{t-r\}\ge 1$ by continuity from above since $\mu$ is locally finite. 
    
    We know that $t-r\leq -1$, and $m(E\cap[0,1])>0$, so $m(E^c\cap(E+t-r))>0$. Also, $\mu(x-E)=1$ for almost all $x\in E^c$, which means that for some $x\in E^c\cap(E+t-r)$, $\mu(x-E)=1$. Because $x\in E+t-r$, $t-r\in x-E$, which shows that $\mu\{t-r\}\leq\mu(x-E)\leq 1$. Thus $\mu\{t-r\}=1$.

    Let $\mu_1(A)=(\mu+\delta_0-\delta_{t-r})(A+t-r)$, so that $\mu_1$ is a locally finite Borel measure. We have
    $$\chi_E*\mu_1=\tau_{r-t}((\mu+\delta_0)*\chi_E)-\delta_{t-r}(x-E+t-r)=1-\chi_E=\chi_{E^c},$$
    where the third equality holds a.e. due to the fact that $\chi_E*\mu=\chi_{E^c}$ a.e. The reasoning above then shows that $\mu_1(t-r,0)=0$ and that $\mu_1\{t-r\}=1$. Therefore
    $$\mu(2(t-r),t-r)=(\mu+\delta_0-\delta_{t-r})(2(t-r),t-r)=\mu_1(t-r,0)=0,$$
    and
    $$\mu\{2(t-r)\}=(\mu+\delta_0-\delta_{t-r})\{2(t-r)\}=\mu_1\{t-r\}=1.$$
    Continuing in this manner, we find that $\mu|_{(-\infty,0]}=\sum_{j=1}^\infty \delta_{j(t-r)}$. 
    
    Let $E'=r-E$, and let $\mu'(A)=\mu(-A)$. Then $E'$ satisfies the hypotheses of the Proposition and $\mu'$ is a locally finite Borel measure. Also,
    $$\chi_{E'}*\mu'(x)=\mu(-x+r-E)=\chi_E*\mu(-x+r)=\chi_{(E')^c}(x),$$
    with the last equality holding a.e. due to the fact that $\chi_E*\mu=\chi_{E^c}$ a.e. Let $r'=\sup E'=\esssup E'$ and $t'=\esssup[0,r'-1]\cap (E')^c$. Then arguing as we have above, we find that $\mu|_{[0,\infty)}=\sum_{k=1}^\infty \delta_{k(r'-t')}$. This means that $\mu+\delta_0=\sum_{\lambda\in \Lambda}\delta_\lambda$, where $\Lambda$ is a discrete set and $|\lambda-\lambda'|\ge 1$ whenever $\lambda\neq\lambda'$. Therefore $[0,1]+\Lambda$ is a packing. And since $E+\Lambda$ is a tiling, $\Lambda$ must be a tiling set for $[0,1]$ by Theorem 2.2. Because $0\in\Lambda$, we must have $\Lambda=\mathbb{Z}$. It follows that $\mu=\sum_{k\in\mathbb{Z}\smallsetminus\{0\}}\delta_k$. \end{proof}


\subsection{Spectrality of second factor in product}

\begin{lemma}
    Let $m(E)=1$, $E\subset[0,3/2-\epsilon]$, and suppose $E$ tiles $\mathbb R$ with respect to $\mathbb{Z}$. Then $\hat{\chi_E}(\xi)\neq 0$ whenever $-1/2\leq\xi\leq 1/2$.
\end{lemma}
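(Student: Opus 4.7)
The plan is to use the tiling hypothesis to write $\chi_E$ almost everywhere as a small perturbation of $\chi_{[0,1)}$, and then algebraically factor $\hat{\chi_E}$ in a way that makes non-vanishing on $[-1/2,1/2]$ transparent.

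First, I would extract the structure of $E$ forced by the tiling. Since $E\subset[0,3/2-\epsilon]$, for a.e.\ $x\in[0,1)$ only the translates by $n=0$ and $n=1$ contribute to $\sum_{n\in\mathbb{Z}}\chi_E(x+n)=1$. Setting
$$A:=E\cap[0,1),\qquad B:=\bigl(E\cap[1,3/2-\epsilon]\bigr)-1\subset[0,1/2-\epsilon),$$
the tiling identity collapses to $\chi_A(x)+\chi_B(x)=1$ for a.e.\ $x\in[0,1)$, so $A$ and $B$ partition $[0,1)$ up to a null set. Equivalently, $\chi_E(x)=\chi_{[0,1)}(x)+\chi_B(x-1)-\chi_B(x)$ almost everywhere.

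Second, I would take Fourier transforms of this identity. Using $\hat{\chi_{[0,1)}}(\xi)=\frac{1-e^{-2\pi i\xi}}{2\pi i\xi}$ and pulling out the common factor yields
$$\hat{\chi_E}(\xi)=(1-e^{-2\pi i\xi})\left(\frac{1}{2\pi i\xi}-\hat{\chi_B}(\xi)\right),$$
interpreted via the removable singularity as $\hat{\chi_E}(0)=m(E)=1$ at $\xi=0$. Showing $\hat{\chi_E}(\xi)\neq 0$ on $[-1/2,1/2]$ thus reduces to showing that neither factor vanishes there.

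Third, I would dispatch the two factors separately. On $[-1/2,1/2]$ the factor $1-e^{-2\pi i\xi}$ vanishes only at $\xi=0$, where $\hat{\chi_E}(0)=1\neq 0$. For $\xi\in[-1/2,1/2]\smallsetminus\{0\}$, the quantity $\frac{1}{2\pi i\xi}$ is purely imaginary and nonzero. If $m(B)=0$ then $\hat{\chi_B}(\xi)=0$ and the second factor is manifestly nonzero. If $m(B)>0$, then because $x\in B\subset[0,1/2-\epsilon)$ and $|\xi|\leq 1/2$ force $2\pi x\xi\in(-\pi/2,\pi/2)$, the integrand in $\mathrm{Re}\,\hat{\chi_B}(\xi)=\int_B\cos(2\pi x\xi)\dx$ is strictly positive on $B$, hence $\mathrm{Re}\,\hat{\chi_B}(\xi)>0$; since $\frac{1}{2\pi i\xi}$ has zero real part, the second factor again cannot vanish.

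I do not anticipate a serious obstacle: the only mildly delicate points are reading off the folded structure of $E$ from the tiling relation and handling the removable singularity of $\frac{1}{2\pi i\xi}$ at $\xi=0$, both of which are routine.
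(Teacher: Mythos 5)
Your proposal is correct and follows essentially the same route as the paper: the same decomposition $\chi_E=\chi_{[0,1)}-\chi_B+\chi_B(\cdot-1)$ forced by the tiling, the same factorization of $\hat{\chi_E}$ into $(1-e^{-2\pi i\xi})$ times $\frac{1}{2\pi i\xi}-\hat{\chi_B}(\xi)$, and the same observation that $\mathrm{Re}\,\hat{\chi_B}(\xi)>0$ while $\frac{1}{2\pi i\xi}$ is purely imaginary. The only cosmetic difference is that you treat the $m(B)=0$ case inline rather than reducing to $\chi_E=\chi_{[0,1]}$ up front.
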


\begin{proof}
    Since $E$ tiles with respect to $\mathbb{Z}$ and $E\subset [0, 3/2-\epsilon]$, there is a set $F\subset [0,1/2-\epsilon]$ such that $\chi_E=\chi_{[0,1]}-\chi_F+\tau_1\chi_F$ a.e., where $\tau_hf(x):=f(x-h)$. We may assume without loss of generality that $m(F)> 0$, for otherwise $\chi_E=\chi_{[0,1]}$ almost everywhere, and so the zeros of $\hat{\chi_E}$ are precisely the non-zero integers. If $\xi\neq 0$,
    $$\hat{\chi_E}(\xi)=(e^{-2\pi i\xi}-1)\left(\hat{\chi_F}(\xi)-\frac{1}{2\pi i\xi}\right).$$
    The roots of $\hat{\chi_E}$, then, are the nonzero roots of $e^{-2\pi i\xi}-1$, together with the roots of $\hat{\chi_F}(\xi)-1/(2\pi i\xi)$. The roots of $e^{-2\pi i\xi}-1$ are precisely the integers. If $-1/2\leq \xi\leq 1/2$, then $|2\pi x\xi|<\pi/2$ for all $x\in F$, since $F\subset [0,1/2-\epsilon]$. This means that $\cos(2\pi x\xi)>0$ on $F$, so
    
    $$\Re(\hat{\chi_F}(\xi))=\int_F\cos(2\pi x\xi)dx=\int_F\left|\cos(2\pi x\xi)\right|dx.$$
    If the integral on the right-hand side were zero, then we would have $\cos(2\pi x\xi)=0$ a.e. on $F$. But this is a contradiction, as $m(F)>0$ by assumption. Therefore $\hat{\chi_F}(\xi)-1/(2\pi i\xi)\neq 0$, which is what we needed to show. \end{proof}

    \begin{proposition}
    Let $m(E)=1$, $E\subset[0,3/2-\epsilon]$, and suppose $E$ tiles $\mathbb{R}$ with respect to $\mathbb{Z}$. Assume $m(F)=1$. If $E\times F$ is spectral, then $F$ is spectral.
\end{proposition}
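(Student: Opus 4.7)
The plan is to invoke Theorem~2.3 with $D = (-1/2, 1/2) \subset \R$, a bounded set of measure one. Under the hypothesis $(D - D) \cap \{\hat{\chi_E} = 0\} = \varnothing$, Theorem~2.3 directly concludes that $F$ is spectral, so the task reduces to showing that $\hat{\chi_E}(\xi) \neq 0$ for every $\xi \in (-1, 1)$. Since $\chi_E$ is real-valued and hence $\hat{\chi_E}(-\xi) = \overline{\hat{\chi_E}(\xi)}$, by conjugate symmetry it suffices to handle $\xi \in [0, 1)$.

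Lemma~3.2 already covers $[0, 1/2]$. For $\xi \in (1/2, 1)$ I would reuse the factorisation from Lemma~3.2's proof: since $E$ tiles $\R$ by $\Z$ and $E \subset [0, 3/2 - \epsilon]$, there exists a measurable $F_E \subset [0, 1/2 - \epsilon]$ with $\chi_E = \chi_{[0,1]} - \chi_{F_E} + \tau_1 \chi_{F_E}$, and consequently
\[
\hat{\chi_E}(\xi) = (e^{-2\pi i \xi} - 1)\Bigl(\hat{\chi_{F_E}}(\xi) - \tfrac{1}{2\pi i \xi}\Bigr).
\]
In $(1/2, 1)$ the first factor is nonzero (the exponential equals $1$ only at integers), so I must exclude the equation $\hat{\chi_{F_E}}(\xi) = 1/(2\pi i \xi)$.

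The main obstacle is precisely this exclusion. Lemma~3.2's positivity-of-cosine argument for $\Re \hat{\chi_{F_E}}$ only secures the required non-vanishing for $|\xi| < 1/(2 - 4\epsilon)$, and one can in fact construct explicit $F_E$'s for which $\hat{\chi_{F_E}}(\xi_0) = 1/(2\pi i \xi_0)$ at some $\xi_0 \in (1/2, 1)$, so a purely elementary extension of Lemma~3.2 is unavailable in general. I therefore expect the argument to proceed by contradiction along the lines sketched in the introduction: assuming $F$ is not spectral, apply Theorem~2.3 (or its stronger form from~\cite{GL20}) to a family of translated sets $D_n$ of measure one to manufacture a sequence of real zeros of $\hat{\chi_E}$, and derive a contradiction by combining Lemma~3.2 (which forbids zeros on $[-1/2, 1/2]$), the tiling identity $\sum_{n \in \Z}|\hat{\chi_E}(\xi - n)|^2 = 1$ arising from $E + \Z = \R$, and the fact that $\hat{\chi_E}$ is real-analytic, so its real zeros are discrete and cannot accumulate in a bounded region. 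Turning that accumulation-of-zeros contradiction into a rigorous argument is the heart of the proof.
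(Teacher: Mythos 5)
Your setup matches the paper's: Lemma~3.2 alone cannot rule out zeros of $\hat{\chi_E}$ in $(1/2,1)$, so one argues by contradiction, and (as in the paper) one uses Theorem~2.3 with a family of measure-one sets $D_n$ to produce a sequence of real zeros of $\hat{\chi_E}$. The paper takes $D_n=(0,t_0)\cup(n-1+t_0,n)$, where $t_0\in(1/2,1)$ is the smallest positive zero of $\hat{\chi_E}$ in $(0,1)$ (its existence in $(1/2,1)$ is exactly what Lemma~3.2 guarantees); since $D_n-D_n=(-n,-n+1)\cup(-t_0,t_0)\cup(n-1,n)$ and $(-t_0,t_0)$ is zero-free by minimality of $t_0$, this forces a zero $b_n\in(n-1,n)$ for every $n$. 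Up to this point your sketch is consistent with the paper.

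The gap is in how you propose to close the argument. The zeros manufactured this way do \emph{not} accumulate in a bounded region: there is one $b_n$ in each interval $(n-1,n)$, so they march off to infinity, and the discreteness of the real zero set of the real-analytic function $\hat{\chi_E}$ (or the Parseval identity $\sum_{n}|\hat{\chi_E}(\xi-n)|^2=1$) imposes no obstruction whatsoever to such a configuration. What actually produces the contradiction in the paper is a quantitative zero-counting argument via Jensen's formula: since $E\subset[0,3/2-\epsilon]$, one has $|\hat{\chi_E}(\rho e^{i\theta})|\le e^{3\pi\rho\sin\theta}$ for $\theta\in(0,\pi)$ and $\le 1$ for $\theta\in(\pi,2\pi)$, so the Jensen integral over the circle of radius $\rho$ is at most $3\rho$; on the other hand the zeros of $\hat{\chi_E}$ in $D(0,N)$ include all nonzero integers in $[-N,N]$ together with $\pm b_n$ for $n\le N$, and since $|b_n|<n$ the Jensen sum is at least $4(N\log N-\log N!)=4N+O(\log N)$. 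The mismatch $4N>3N$ for large $N$ is the contradiction, and it is precisely here that the hypothesis $E\subset[0,3/2-\epsilon]$ (rather than, say, $E\subset[0,2]$) is used: the interval length controls the exponential type of $\hat{\chi_E}$, which in turn caps the admissible density of its zeros. Without this counting step --- which you explicitly defer --- the proof is incomplete, and the mechanism you do propose would not yield a contradiction.
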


\begin{proof}
    Suppose $E\subset[0,3/2-\epsilon]$, $m(E)=1$, and that $E$ tiles $\mathbb{R}$ with respect to $\mathbb{Z}$. Then $E$ admits $\mathbb{Z}$ as a spectrum as proven in \cite{Fug74}. If $\hat{\chi_E}(\xi)\neq 0$ for $\xi\in (-1,1)$, then we could apply Theorem 2.3 with $D=(0,1)$ as above and conclude that $F$ is spectral\footnote{This is not always the case though; for example, with $E = [0, \frac12] \cup [\frac34, \frac54]$, $\hat{\chi_E}$ has a zero at $\frac23 \in (-1,1)$.}. Therefore, by Lemma 3.2, we have that $t_0:=\inf\{\xi\in (0,1):\hat{\chi_E}(\xi)=0\}\in (1/2,1)$; note that $\hat{\chi_E}(t_0)=0$ by the Identity Theorem. 
    
    Suppose for contradiction that $F$ is not spectral. For each domain $D_n:=(0,t_0)\cup(n-1+t_0,n)$, we have a zero of $\hat{\chi_E}$ in $D_n-D_n$ by Theorem 2.3 once again. The difference between two points in $(n-1+t_0,n)$ must lie in $(-t_0,t_0)$, since $1-t_0<t_0$. The difference between a point of $(n-1+t_0,n)$ and a point in $(0,t_0)$ must either lie in $(n-1,n)$ or in $(-n,-n+1)$. Therefore $D_n-D_n=(-n,-n+1)\cup(-t_0,t_0)\cup (n-1,n)$. By the minimality of $t_0$, there are no zeros of $\hat{\chi_E}$ in $(-t_0,t_0)$. So for each interval $(n-1,n)$, $n\in\mathbb{Z}_+$, there is a zero, $b_n$, of $\hat{\chi_E}$ by Theorem 2.3 (1) and the fact that the zero sets of Fourier transforms of real-valued functions are symmetric about zero. This also means that $-b_n$ is a zero of $\hat{\chi_E}$ for each $n\in\mathbb{Z}_+$.

For $\rho>0$, since $\hat{\chi_E}(0)=1$ and $\hat{\chi_E}$ is an entire function, Jensen's formula tells us that
$$\sum_{j=1}^k\log\left(\frac{\rho}{|a_j|}\right)=\frac{1}{2\pi}\int_0^{2\pi}\log|\hat{\chi_E}(\rho e^{i\theta})|d\theta,$$
where $a_1,\cdots,a_k$ are the zeros of $\hat{\chi_E}$ in $D(0,\rho)\subset\mathbb{C}$. To estimate the integral on the right hand side, observe:
$$|\hat{\chi_E}(\rho e^{i\theta})|\leq\int_E|e^{-2\pi i x\rho(\cos(\theta)+i\sin(\theta))}|dx=\int_E e^{2\pi x\rho\sin(\theta)}dx.$$
For $\theta\in (\pi,2\pi)$, $\sin(\theta)<0$, so the right-hand side of the inequality above is bounded above by $1$. Thus $\log|\hat{\chi_E}(\rho e^{i\theta})|\leq 0$ for $\theta\in (\pi,2\pi)$. By contrast, $\sin(\theta)> 0$ 
whenever $\theta\in (0,\pi)$, so the map $x\mapsto e^{2\pi x\rho\sin(\theta)}$ is increasing in $x$. And since $E\subset [0,3/2-\epsilon]$, $x\leq 3/2$. Therefore
$$\int_E e^{2\pi  x\rho\sin(\theta)}dx\leq\int_E e^{3\pi\rho\sin(\theta)}dx=e^{3\pi\rho\sin(\theta)},$$
so long as $\theta\in (0,\pi)$. This gives us the estimate
$$\frac{1}{2\pi}\int_0^{2\pi}\log|\hat{\chi_E}(\rho e^{i\theta})|d\theta\leq \frac{1}{2\pi}\int_0^\pi 3\pi\rho\sin(\theta)d\theta=3\rho.$$
Suppose $\rho>N$ for some $N\in\mathbb{Z}_+$. Then the zeros $\{a_j\}$ of $\hat{\chi_E}$ in $D(0,\rho)$ include the nonzero integers in $[-N,N]$, as well as each $b_n$ and each $-b_n$ for $n\in [1,N]$. Therefore, since $|b_n|<n$ for each $n$,
$$\sum_{j=1}^k\log\left(\frac{\rho}{|a_j|}\right)\ge 2\sum_{n=1}^N\left(\log\frac{N}{n}+\log\frac{N}{b_n}\right)\ge 4\sum_{n=1}^N\log\frac{N}{n}=4(N\log N-\log (N!)).$$
Letting $\rho\to N$ shows that $4(N\log N-\log(N!))\leq 3N$ for each $N$. By Stirling's approximation (\cite{Ahlfors1978}), $\log N!=N\log N-N+O(\log(N))$. This means that $4(N\log N-\log(N!))=4N+O(\log N)$, which gives us a contradiction as $N\to\infty$. \end{proof}

\section{Conclusion}\label{s:argument}

We now deduce Theorem 1.3 from the results we have already proven.

\noindent\emph{Proof of Theorem 1.3.}
    Let $m(E)=1$, $E\subset [0, 3/2-\epsilon]$, let $F$ be bounded and measurable, and let $E\times F$ be spectral. By modifying $E$ (and thus $E\times F$) on a set of measure $0$, we may assume that $\sup E=\esssup E$ and that $\inf E=\essinf E$ without affecting the spectrality or tiling properties of $E$, $F$, or $E\times F$. By Theorem 2.4 and Theorem 2.5, $E$ weakly tiles its complement in $\mathbb{R}$. By Proposition 3.1, $E+\mathbb{Z}$ is a tiling. It is well known that this implies that $\mathbb{Z}$ is a spectrum for $E$, as proven in Fuglede's original paper \cite{Fug74}. By Proposition 3.3, $F$ is spectral. \qed


\bibliographystyle{plain}
\bibliography{references}









\end{document}